\def\figurename{Figure} 
\renewcommand{\fnum@figure}[1]{\figurename~\thefigure.}
\def\tablename{Table} 
\renewcommand{\fnum@table}[1]{\centering\bfseries{\tablename~\thetable.}}
\newtheorem{thm}{Theorem}[section]
\newtheorem{lem}[thm]{Lemma}
\theoremstyle{definition}
\theoremstyle{remark}
\newtheorem{rem}[thm]{Remark}
\numberwithin{equation}{section}
\begin{document}
\title{\bfseries \scshape{On a linear non-homogeneous  ordinary differential equation of the higher
order whose coefficients are  real-valued simple step functions}}
\author{\bfseries\itshape Gogi Pantsulaia\thanks{E-mail address: g.pantsulaia@gtu.ge},~Khatuna Chargazia \thanks{E-mail address: khatuna.chargazia@gmail.com},
Givi Giorgadze\thanks{E-mail address: g.giorgadze@gtu.ge}\\
I. Vekua Institute of Applied Mathematics, Tbilisi State
University,  B. P.
0143,\\ University St. 2, Tbilisi, Georgia \\
Department of Mathematics,~ Georgian Technical University, B. P.
0175,  \\ Kostava St. 77, Tbilisi 75, Georgia }

\date{}
\maketitle \thispagestyle{empty} \setcounter{page}{1}


\begin{abstract}By using the method developed in the paper [G.Pantsulaia, G.Giorgadze, On some applications of infinite-dimensional
cellular matrices, {\it Georg. Inter. J. Sci. Tech., Nova Science
Publishers,}  Volume 3,  Issue 1 (2011), 107-129], it is obtained
a representation  in  an explicit form of the particular solution
of the linear non-homogeneous  ordinary differential equation of
the higher order whose coefficients are
 real-valued  simple functions.
\end{abstract}

%
%
%

\noindent \textbf{2000 Mathematics Subject Classification:
}Primary 34Axx ; Secondary 34A35, 34K06.

\vspace{.08in} \noindent \textbf{Key words and phrases:}
 linear ordinary differential equation, non-homogeneous ordinary differential equation.

\section{Introduction}

In \cite{Pan-Gio11} has been obtained a representation  in  an
explicit form of the particular solution of the linear
non-homogeneous ordinary differential equation of the higher order
with  real-valued coefficients. The aim of the present manuscript
is resolve an analogous problem for a linear non-homogeneous
ordinary differential equation of the higher order when
coefficients are real-valued simple step functions.

The paper is organized as follows.

In Section 2, we consider some auxiliary results obtained in the
paper \cite{Pan-Gio11}.  In Section 3, it is obtained  a
representation in an explicit form of the particular solution of
the linear non-homogeneous  ordinary differential equation of the
higher order whose coefficients are
 real-valued  simple functions. In Section 4 we present mathematical programm in MathLab for the
graphical solution of the corresponding differential equation.

\section{Some auxiliary propositions}

\medskip

 For $n \in N$, we denote by  $FD^{(n)}[-l,l[$  a vector
space of all $n$-times  differentiable functions $\Psi$  on
$[-l,l[$ such that a series obtained by $k$-times differentiation
term by term of the Fourier trigonometric series of $\Psi$
pointwise converges to $\Psi^{(k)}$ for all $x \in [-l,l[$ and $0
\le k \le n$.

Let  $(A_n)_{0 \le n \le 2M}$ be a sequence of real numbers, where
$M$ is any natural number. For each $k \ge 1$ we put
 $$\sigma_k=\sum_{n=0}^m (-1)^n A_{2n}(\frac{k \pi}{l})^{2n}, \eqno(2.1)$$
 $$ \omega_k=\sum_{n=0}^{m-1} (-1)^{n}A_{2n+1}(\frac{k\pi}{l})^{2n+1}.\eqno(2.2)$$

\begin{thm} (\cite{Pan-Gio11}, Theorem 3.1, p.45) For $m \ge 1$, let us consider an
ordinary differential equation
$$
\sum_{n=0}^{2m} A_n \frac{d^{n}}{d x^{n}} \Psi=f, \eqno(2.3)
$$
 where
$$
f(x)=\frac{c_0}{2}+\sum_{k=1}^{\infty}c_k \cos\Big(\frac{k \pi x
}{l}\Big)+ d_k \sin\Big(\frac{k \pi x }{l}\Big)\in FD^{(0)}[-l,l[
\eqno(2.4)
$$
and  $A_n \in R$ for $0 \le n \le 2m$.

Suppose that $A_0 \neq 0$ and $\sigma_k^2+\omega_k^2 \neq 0 $ for
$k \ge 1$,  where $\sigma_k$ and $\omega_k$ are defined by (2.1)
and (2.2), respectively.

If $(\frac{c_0}{2}, c_1, d_1, c_2, d_2, \dots )$ is such a
sequence of real numbers that the series $\Psi_p$, defined by
$$\Psi_p(x)=\frac{c_0}{2A_0} +\sum_{k=1}^{\infty}\Big( \frac{c_k\sigma_k-d_k\omega_k}{\sigma_k^2+\omega_k^2} \Big)     \cos\Big(\frac{k
\pi x }{l}\Big)+ \Big(
\frac{c_k\omega_k+d_k\sigma_k}{\sigma_k^2+\omega_k^2} \Big)
\sin\Big(\frac{k \pi x }{l}\Big), \eqno(2.5)
$$
belongs to the class $FD^{(2m)}[-l,l[$, then $\Psi_p$ is a
particular solution of  (2.3).
\end{thm}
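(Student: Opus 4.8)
The plan is to verify directly that the series $\Psi_p$ given by (2.5), under the hypothesis that it lies in $FD^{(2m)}[-l,l[$, satisfies the differential equation (2.3). First I would expand the left-hand side $\sum_{n=0}^{2m} A_n \frac{d^n}{dx^n}\Psi_p$ by differentiating the trigonometric series term by term; this term-by-term differentiation is exactly what the membership $\Psi_p \in FD^{(2m)}[-l,l[$ licenses (by the very definition of that class given before Theorem 2.1), so no convergence subtleties arise beyond invoking that hypothesis. The constant term $\frac{c_0}{2A_0}$ contributes only through $A_0$ (all higher derivatives kill it), giving $A_0 \cdot \frac{c_0}{2A_0} = \frac{c_0}{2}$, which matches the constant term of $f$ in (2.4).

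Next I would focus on a single frequency $k \ge 1$ and compute the action of the operator $\sum_{n=0}^{2m} A_n \frac{d^n}{dx^n}$ on $\alpha_k \cos(\frac{k\pi x}{l}) + \beta_k \sin(\frac{k\pi x}{l})$, where $\alpha_k = \frac{c_k\sigma_k - d_k\omega_k}{\sigma_k^2+\omega_k^2}$ and $\beta_k = \frac{c_k\omega_k + d_k\sigma_k}{\sigma_k^2+\omega_k^2}$. The key computation is that $\frac{d^{2n}}{dx^{2n}}\cos(\frac{k\pi x}{l}) = (-1)^n (\frac{k\pi}{l})^{2n}\cos(\frac{k\pi x}{l})$ while odd-order derivatives turn cosine into sine and vice versa with the analogous sign and power factors. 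Summing the even-order terms reproduces the factor $\sigma_k$ from (2.1) acting diagonally, and summing the odd-order terms reproduces $\omega_k$ from (2.2) acting as a rotation that swaps $\cos \leftrightarrow \sin$. Concretely, applying the operator to $\cos(\frac{k\pi x}{l})$ yields $\sigma_k \cos(\frac{k\pi x}{l}) - \omega_k \sin(\frac{k\pi x}{l})$ (up to fixing the sign convention carefully), and applied to $\sin(\frac{k\pi x}{l})$ it yields $\omega_k \cos(\frac{k\pi x}{l}) + \sigma_k \sin(\frac{k\pi x}{l})$.

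Then the $k$-th block of the left-hand side becomes $(\sigma_k \alpha_k + \omega_k \beta_k)\cos(\frac{k\pi x}{l}) + (-\omega_k \alpha_k + \sigma_k \beta_k)\sin(\frac{k\pi x}{l})$, and substituting the formulas for $\alpha_k, \beta_k$ one checks that the cosine coefficient collapses to $c_k$ and the sine coefficient to $d_k$: indeed $\sigma_k(c_k\sigma_k - d_k\omega_k) + \omega_k(c_k\omega_k + d_k\sigma_k) = c_k(\sigma_k^2+\omega_k^2)$, and similarly for the sine part, so the denominator cancels. This is precisely the $2\times 2$ linear system $\begin{pmatrix}\sigma_k & \omega_k \\ -\omega_k & \sigma_k\end{pmatrix}\begin{pmatrix}\alpha_k \\ \beta_k\end{pmatrix} = \begin{pmatrix}c_k \\ d_k\end{pmatrix}$ being solved by Cramer's rule, which is legitimate exactly because $\sigma_k^2 + \omega_k^2 \neq 0$ by hypothesis; this nonvanishing, together with $A_0 \neq 0$ for the constant term, is what makes the coefficients in (2.5) well-defined in the first place. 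Assembling the constant term and all frequency blocks reproduces the full Fourier series (2.4) of $f$, so $\sum_{n=0}^{2m} A_n \Psi_p^{(n)} = f$ on $[-l,l[$.

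The main obstacle I anticipate is purely bookkeeping rather than conceptual: getting the signs exactly right in the $2\times 2$ matrix of the operator's action on the pair $(\cos, \sin)$, since the odd-order derivatives alternate in sign and also swap the two functions, and one must track how $(-1)^n$ in the definition (2.2) of $\omega_k$ interacts with the derivative of $\sin$ versus $\cos$. Once the correct matrix is pinned down, verifying that (2.5) inverts it is a one-line Cramer's-rule check. I would also state explicitly at the outset that the only place analysis (as opposed to algebra) enters is the appeal to $\Psi_p \in FD^{(2m)}[-l,l[$, which by definition permits the term-by-term differentiation of orders $0$ through $2m$; everything after that is finite-dimensional linear algebra performed frequency by frequency.
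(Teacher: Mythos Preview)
Your proof is correct. The term-by-term differentiation is justified exactly as you say by the hypothesis $\Psi_p\in FD^{(2m)}[-l,l[$, and the $2\times 2$ block computation you outline is accurate: the operator sends $\cos(\tfrac{k\pi x}{l})$ to $\sigma_k\cos(\tfrac{k\pi x}{l})-\omega_k\sin(\tfrac{k\pi x}{l})$ and $\sin(\tfrac{k\pi x}{l})$ to $\omega_k\cos(\tfrac{k\pi x}{l})+\sigma_k\sin(\tfrac{k\pi x}{l})$, and the coefficients in (2.5) are precisely the inverse of the matrix $\bigl(\begin{smallmatrix}\sigma_k & \omega_k\\ -\omega_k & \sigma_k\end{smallmatrix}\bigr)$ applied to $(c_k,d_k)^T$. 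The sign bookkeeping you flag as the main hazard is handled correctly.

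Note, however, that the present paper does \emph{not} supply its own proof of this statement: Theorem~2.1 is quoted verbatim from \cite{Pan-Gio11} (Theorem~3.1 there) as an auxiliary result, with no argument reproduced here. So there is no proof in this paper to compare against. Your direct-verification approach is in any case the natural one and is almost certainly what the cited paper does as well, since the formula (2.5) is visibly the frequency-by-frequency inversion of the constant-coefficient operator on the Fourier basis.
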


\begin{thm}(\cite{Pan-Gio11}, Theorem 3.2, p.45) For $m \ge 1$, let us consider an ordinary differential equation
(2.3),
 where
$$
f(x) \in C[-l,l] \eqno(2.6)
$$
and  $A_n \in R$ for $0 \le n \le 2m$.

Suppose that $A_0 \neq 0$ and $\sigma_k^2+\omega_k^2 \neq 0 $ for
$k \ge 1$, where $\sigma_k$ and $\omega_k$ are defined by (2.1)
and (2.2), respectively. Let $(\frac{c_0}{2}, c_1, d_1, c_2, d_2,
\dots )$ be Fourier coefficients of $f$ and $(\frac{c_0}{2}, c_1,
d_1, c_2, d_2, \dots ) \in \ell_1.$

Then  the series $\Psi_p$,
defined by
$$\Psi_p(x)=\frac{c_0}{2A_0} +\sum_{k=1}^{\infty}\Big( \frac{c_k\sigma_k-d_k\omega_k}{\sigma_k^2+\omega_k^2} \Big)     \cos\Big(\frac{k
\pi x }{l}\Big)+ \Big(
\frac{c_k\omega_k+d_k\sigma_k}{\sigma_k^2+\omega_k^2} \Big)
\sin\Big(\frac{k \pi x }{l}\Big), \eqno(2.7)
$$
is a particular  solution of  (2.3).
\end{thm}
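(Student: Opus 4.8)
The plan is to reduce Theorem 2.2 to Theorem 2.1 by verifying that, under the stronger hypothesis $(\tfrac{c_0}{2},c_1,d_1,c_2,d_2,\dots)\in\ell_1$, the candidate series $\Psi_p$ in $(2.7)$ actually lies in the class $FD^{(2m)}[-l,l[$, so that Theorem 2.1 then directly yields that $\Psi_p$ solves $(2.3)$. Thus the whole content of the proof is a convergence/regularity check on $\Psi_p$ and on its formally differentiated series up to order $2m$.

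\medskip
\noindent\textbf{Step 1 (growth of the coefficients of $\Psi_p$).} First I would record the elementary estimate on the denominators: since $\sigma_k$ is a polynomial in $k$ of exact degree $2m$ with leading coefficient $(-1)^m A_{2m}(\pi/l)^{2m}$ and $\omega_k$ is a polynomial of degree at most $2m-1$ (actually with the ordering used, $\sigma_k$ contains the top-order term $A_{2m}(k\pi/l)^{2m}$), one has $\sigma_k^2+\omega_k^2 \ge c\, k^{4m}$ for all large $k$, for some constant $c>0$ depending only on $m$, $l$ and the $A_n$; combined with the standing assumption $\sigma_k^2+\omega_k^2\neq 0$ for every $k\ge 1$, this gives a uniform lower bound $\sigma_k^2+\omega_k^2\ge c' k^{4m}$ for all $k\ge 1$. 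Meanwhile the numerators $c_k\sigma_k-d_k\omega_k$ and $c_k\omega_k+d_k\sigma_k$ are bounded in modulus by $C k^{2m}(|c_k|+|d_k|)$. Hence each Fourier coefficient of $\Psi_p$ is $O\big(k^{-2m}(|c_k|+|d_k|)\big)$.

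\medskip
\noindent\textbf{Step 2 (term-by-term differentiation up to order $2m$).} Differentiating the general term $\cos(k\pi x/l)$ or $\sin(k\pi x/l)$ exactly $j$ times multiplies the coefficient by $(k\pi/l)^{j}$, i.e. by a factor $O(k^{j})$ with $0\le j\le 2m$. Therefore the $j$-times formally differentiated series of $\Psi_p$ has general term bounded by $C k^{\,j-2m}(|c_k|+|d_k|)\le C(|c_k|+|d_k|)$ for $j\le 2m$. Since $(c_k)_k,(d_k)_k\in\ell_1$ by hypothesis, each of these $2m+1$ series converges absolutely and uniformly on $[-l,l[$ by the Weierstrass $M$-test. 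Absolute and uniform convergence of the differentiated series is exactly the classical justification for interchanging differentiation and summation, so by induction on $j$ the sum function $\Psi_p$ is $2m$ times differentiable on $[-l,l[$ and its $k$-th derivative equals the $k$-times term-by-term differentiated Fourier series for each $0\le k\le 2m$. That is precisely the defining property of membership in $FD^{(2m)}[-l,l[$ (and the series $(2.7)$ is already written as a trigonometric series, so no separate identification of Fourier coefficients is needed).

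\medskip
\noindent\textbf{Step 3 (invoke Theorem 2.1).} It remains to observe that the hypotheses of Theorem 2.1 are met: $A_0\neq 0$ and $\sigma_k^2+\omega_k^2\neq0$ for $k\ge1$ are assumed here verbatim, the right-hand side $f\in C[-l,l]\subset FD^{(0)}[-l,l[$ (its Fourier series converges pointwise to $f$, which is part of what $\ell_1$ Fourier coefficients guarantees), and $\Psi_p\in FD^{(2m)}[-l,l[$ by Step 2. Theorem 2.1 then asserts that this $\Psi_p$ is a particular solution of $(2.3)$, which is the claim. The one point that deserves care — and the main (mild) obstacle — is the polynomial lower bound $\sigma_k^2+\omega_k^2\ge c' k^{4m}$ uniformly in $k\ge1$ in Step 1: one must make sure the leading term of $\sigma_k$ does not vanish, i.e. that $A_{2m}\neq0$, or otherwise track which is the true top-order coefficient among the $A_n$; if $A_{2m}=0$ the exponent $4m$ should be replaced by $4m^\ast$ where $2m^\ast$ is the actual order of the equation, and since that order is $\ge 1$ while $j\le 2m$ could exceed $m^\ast$, one would instead only get $k^{\,j-2m^\ast}(|c_k|+|d_k|)$, which is still summable once $j-2m^\ast\le 0$; when $2m^\ast<2m$ the intended reading is surely that $A_{2m}\neq0$, and I would state that normalization explicitly at the start of the proof. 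Everything else is the routine Weierstrass-$M$-test bookkeeping of Steps 1–2.
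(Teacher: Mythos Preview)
The paper does not prove this statement at all: Theorem~2.2 is quoted verbatim from \cite{Pan-Gio11} (Theorem~3.2 there) as an auxiliary proposition, with no argument supplied in the present manuscript. So there is no in-paper proof to compare your attempt against.

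That said, your reduction to Theorem~2.1 is the natural and correct strategy, and Steps~1--2 carry it out cleanly: the bound $\sigma_k^2+\omega_k^2\ge c' k^{4m}$ (under $A_{2m}\neq 0$) together with $|\sigma_k|,|\omega_k|=O(k^{2m})$ gives the $O(k^{-2m}(|c_k|+|d_k|))$ decay of the coefficients of $\Psi_p$, and then the Weierstrass $M$-test handles all derivatives up to order~$2m$, placing $\Psi_p$ in $FD^{(2m)}[-l,l[$. Your remark that a uniformly convergent trigonometric series is automatically the Fourier series of its sum is exactly what is needed to match the definition of $FD^{(2m)}$. The only loose end you yourself flag is the case $A_{2m}=0$; your proposed fix (work with the true order $2m^\ast$) breaks down when $2m^\ast<2m$ since then $j-2m^\ast$ can be positive, so simply \emph{assuming} $A_{2m}\neq 0$ at the outset, as you suggest, is the clean resolution and is the standard convention when one speaks of an equation ``of order $2m$''.
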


\section{A non-homogeneous ordinary differential equation of higher order whose coefficients are continuous or real-valued step
functions }

Let  consider a partition of $[-l,l[$ defined by
$$[-l,l[=\cup_{s=0}^{S-1}[\frac{l(2s-S)}{S},\frac{l(2s+2-S)}{S}[ )$$
We define a differential  operator
$$
L(\Psi)=\sum_{n=0}^{2m} A_n(x) \frac{d^{n}}{d x^{n}}\Psi
$$
for $\Psi \in FD^{(2m)}[-l,l[$. Notice that $L(\Psi)$ can be
rewritten as follows
$$
L(\Psi)=\cup_{s=0}^{S-1}Ind_{[\frac{l(2s-S)}{S},\frac{l(2s+2-S)}{S}[}\big(\sum_{n=0}^{2m}
A_n(x) \frac{d^{n}}{d x^{n}}\big) \Psi
$$
for $\Psi \in FD^{(2m)}[-l,l[$, where $Ind$ denotes an indicator
function.

For each $S \in \mathbb{N}$ we define an operator $L_S$  by

$$
L_S(\Psi)=\sum_{s=0}^{S-1}Ind_{[\frac{l(2s-S)}{S},\frac{l(2s+2-S)}{S}[}\big(\sum_{n=0}^{2m}
A_n(\frac{l(2s+1-S)}{S}) \frac{d^{n}}{d x^{n}}\big) \Psi
$$
for $\Psi \in FD^{(2m)}[-l,l]$.

\begin{lem} For each $\Psi \in FD^{(2m)}[-l,l[$ we have
$$
L(\Psi)=\lim_{S \to \infty}L_S(\Psi).
$$
\end{lem}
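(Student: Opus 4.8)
The plan is to reduce the claimed limit, which is an identity of functions on $[-l,l[$, to a pointwise (indeed locally uniform) statement and then invoke the continuity of the coefficient functions $A_n$. Fix $\Psi\in FD^{(2m)}[-l,l[$ and fix a point $x\in[-l,l[$. For each $S\in\mathbb N$ the point $x$ lies in exactly one subinterval of the partition, say $x\in[\frac{l(2s_S-S)}{S},\frac{l(2s_S+2-S)}{S}[$, and on that subinterval
$$
L_S(\Psi)(x)=\sum_{n=0}^{2m}A_n\Big(\tfrac{l(2s_S+1-S)}{S}\Big)\Psi^{(n)}(x),
$$
while $L(\Psi)(x)=\sum_{n=0}^{2m}A_n(x)\Psi^{(n)}(x)$. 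So the whole statement comes down to showing that the sample point $\xi_S:=\frac{l(2s_S+1-S)}{S}$ (the midpoint of the subinterval containing $x$) converges to $x$ as $S\to\infty$, and then passing to the limit term by term in a finite sum.

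First I would verify that $|\xi_S-x|\le \frac{l}{S}$. Indeed, each subinterval has length $\frac{2l}{S}$ and $\xi_S$ is its midpoint, so any point $x$ of that subinterval is within $\frac{l}{S}$ of $\xi_S$; hence $\xi_S\to x$. Next, since each $A_n$ is continuous (this is the hypothesis under which the operator $L$ is being considered — the coefficients are continuous, the step-function case being handled separately as its $L_S$-type approximand), continuity gives $A_n(\xi_S)\to A_n(x)$ for each $n=0,1,\dots,2m$. Because $\Psi\in FD^{(2m)}[-l,l[$, the values $\Psi^{(n)}(x)$ are finite fixed real numbers for $0\le n\le 2m$. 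Therefore
$$
L_S(\Psi)(x)=\sum_{n=0}^{2m}A_n(\xi_S)\,\Psi^{(n)}(x)\ \longrightarrow\ \sum_{n=0}^{2m}A_n(x)\,\Psi^{(n)}(x)=L(\Psi)(x),
$$
the limit of a finite sum being the sum of the limits. Since $x\in[-l,l[$ was arbitrary, $L(\Psi)=\lim_{S\to\infty}L_S(\Psi)$ pointwise, which is the asserted identity; if one wants uniform convergence on $[-l,l[$ it follows from uniform continuity of each $A_n$ on the closure $[-l,l]$ together with boundedness of each $\Psi^{(n)}$ there.

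I do not expect a serious obstacle here; the argument is essentially the standard fact that a continuous function is the locally uniform limit of its "sampled step functions" on a refining sequence of partitions. The only points that require a little care are bookkeeping ones: making the index $s_S$ of the subinterval containing $x$ explicit (it is $s_S=\lfloor \frac{S(x+l)}{2l}\rfloor$, with the obvious adjustment at the right endpoint which is excluded anyway since the interval is $[-l,l[$), and noting that the finitely many terms $n=0,\dots,2m$ let one interchange the limit with the sum without any summability hypothesis. The edge case $x$ near $l$ is harmless because $\xi_S\le \frac{l(S-1)}{S}<l$ stays inside $[-l,l[$, so every $A_n(\xi_S)$ is well defined.
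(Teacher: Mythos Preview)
Your argument is correct and is the natural proof of this lemma. The paper itself states Lemma~3.1 without proof, so there is nothing to compare against; your pointwise reduction to $A_n(\xi_S)\to A_n(x)$ via continuity of the coefficients (the hypothesis in force in Theorem~3.2, where the lemma is invoked), together with the finiteness of the sum over $n$, is exactly the expected justification.
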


\medskip

\begin{thm}

 For $m \ge 1$, let us consider an
ordinary differential equation
$$
\sum_{n=0}^{2m} A_n(x) \frac{d^{n}}{d x^{n}} \Psi=f, \eqno(3.1)
$$
 where
$$
f(x)=\frac{c_0}{2}+\sum_{k=1}^{\infty}c_k \cos\Big(\frac{k \pi x
}{l}\Big)+ d_k \sin\Big(\frac{k \pi x }{l}\Big)\in FD^{(0)}[-l,l[
\eqno(3.2)
$$
and  $A_n(x) \in C[-l,l]$ for $0 \le n \le 2m$.

Suppose that $A_0(x) =1$  and $\sigma_k^2(x)+\omega_k^2(x) \neq 0
$ for $ x \in [-l,l[$ and  $k \ge 1$,  where $\sigma_k(x)$ and
$\omega_k(x)$ are defined by
$$\sigma_k(x)=\sum_{n=0}^m (-1)^n A_{2n}(x)(\frac{k \pi}{l})^{2n}, \eqno(3.3)$$
 $$ \omega_k(x)=\sum_{n=0}^{m-1} (-1)^{n}A_{2n+1}(x)(\frac{k\pi}{l})^{2n+1}.\eqno(3.4)$$

Suppose that the following conditions are valid:

(i) $(\frac{c_0}{2}, c_1, d_1, c_2, d_2, \dots ) \in \ell_1$;

(iii) There is a  constant $C>0$ such that

 $$
 \Big|\frac{\omega_k(x+h)}{\sigma_k^2(x+h)+\omega_k^2(x+h)}-
 \frac{\omega_k(x)}{\sigma_k^2(x)+\omega_k^2(x)}\Big|\le
 C|h|^{2}
 $$
and
$$
 \Big|\frac{\sigma_k(x+h)}{\sigma_k^2(x+h)+\omega_k^2(x+h)}-\frac{\sigma_k(x)}{\sigma_k^2(x)+\omega_k^2(x)}\Big|\le
 C|h|^{2}.
 $$

Then the function  $\Psi_0$, defined by
$$\Psi_0(x)=\frac{c_0}{2} +\sum_{k=1}^{\infty}\Big( \frac{c_k\sigma_k(x)-d_k\omega_k(x)}
{\sigma_k^2(x)+\omega_k^2(x)} \Big)     \cos\Big(\frac{k \pi x
}{l}\Big)+ $$
 $$\Big( \frac{c_k\omega_k(x)+d_k\sigma_k(x)}{\sigma_k^2(x)+\omega_k^2(x)} \Big)
  \sin\Big(\frac{k \pi x }{l}\Big), \eqno(3.5)
$$
 is a
particular solution of  (3.1).
\end{thm}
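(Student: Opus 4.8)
The plan is to read the variable-coefficient operator $L$ against the Fourier series of $\Psi_0$ term by term, reducing on each frequency $k$ to the constant-coefficient algebra of Theorems 2.1--2.2, and to use hypothesis (iii) to annihilate everything that the variable coefficients add. First I would record the pointwise differentiation identities that drive the computation: for each $k\ge1$ and each $x$,
$$\sum_{n=0}^{2m} A_n(x)\frac{d^{n}}{dx^{n}}\cos\Big(\frac{k\pi x}{l}\Big)=\sigma_k(x)\cos\Big(\frac{k\pi x}{l}\Big)-\omega_k(x)\sin\Big(\frac{k\pi x}{l}\Big),$$
together with the companion identity for $\sin(k\pi x/l)$ obtained in the same way (these are exactly the manipulations behind Theorems 2.1--2.2, now read with $x$-dependent $A_n$). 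Combined with the elementary algebraic fact
$$\frac{c_k\sigma_k(x)-d_k\omega_k(x)}{\sigma_k^2(x)+\omega_k^2(x)}\,\sigma_k(x)+\frac{c_k\omega_k(x)+d_k\sigma_k(x)}{\sigma_k^2(x)+\omega_k^2(x)}\,\omega_k(x)=c_k$$
and its sine counterpart giving $d_k$, this shows that \emph{if} the coefficients of the series for $\Psi_0$ were constants, then $L\Psi_0$ would reproduce $f$ term by term.

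Next I would bring in Lemma 3.1 and the operators $L_S$: since $L(\Psi_0)=\lim_{S\to\infty}L_S(\Psi_0)$, it suffices to analyse $L_S(\Psi_0)$, and on each cell $I_s$ the operator $L_S$ has the constant coefficients $A_n(x_s)$. Assuming $\Psi_0\in FD^{(2m)}[-l,l[$ (a routine point: it follows from (i) together with the growth of $\sigma_k^2(x)+\omega_k^2(x)$ in $k$, which makes the Fourier coefficients of $\Psi_0$ decay fast enough to differentiate the series $2m$ times), I would apply $L_S$ to the series for $\Psi_0$ on $I_s$, expanding each $\frac{d^{n}}{dx^{n}}$ by the Leibniz rule into the term in which no derivative falls on the coefficient function $\frac{c_k\sigma_k(x)-d_k\omega_k(x)}{\sigma_k^2(x)+\omega_k^2(x)}$ and the remaining terms. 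The first kind of term, summed over $n$ and $k$, is handled by the identities of the first paragraph and equals $f(x)$; what survives is the difference between evaluating $\sigma_k,\omega_k$ at $x_s$ versus at $x$, plus all the Leibniz terms carrying at least one derivative of a coefficient function.

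The surviving terms are where hypothesis (iii) is decisive, and this is the step I expect to be the crux. Setting $P_k(x)=\sigma_k(x)/(\sigma_k^2(x)+\omega_k^2(x))$ and $Q_k(x)=\omega_k(x)/(\sigma_k^2(x)+\omega_k^2(x))$, condition (iii) asserts $|P_k(x+h)-P_k(x)|\le C|h|^{2}$ and $|Q_k(x+h)-Q_k(x)|\le C|h|^{2}$; dividing by $|h|$ and letting $h\to0$ forces $P_k'(x)=Q_k'(x)=0$, so $P_k$ and $Q_k$ are constant on $[-l,l[$, and hence so are $\sigma_k,\omega_k$ and the coefficient functions $\frac{c_k\sigma_k-d_k\omega_k}{\sigma_k^2+\omega_k^2}$, $\frac{c_k\omega_k+d_k\sigma_k}{\sigma_k^2+\omega_k^2}$. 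Therefore both sources of surviving terms vanish identically: $\sigma_k(x_s)=\sigma_k(x)$, $\omega_k(x_s)=\omega_k(x)$, and every Leibniz term with a derivative of a coefficient function is zero. Thus $L_S(\Psi_0)(x)=f(x)$ for all $x$ and all $S$, and passing to the limit via Lemma 3.1 gives $L(\Psi_0)=f$. Once constancy of the coefficients is in hand one may equivalently skip $L_S$ and differentiate the series for $\Psi_0$ directly against $L$, using the pointwise identities of the first paragraph and the relations $P_k\sigma_k(x)+Q_k\omega_k(x)\equiv1$, $P_k\omega_k(x)-Q_k\sigma_k(x)\equiv0$. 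Throughout, the interchanges of $L$ or of $\frac{d^n}{dx^n}$ with the infinite sum, and all rearrangements, are justified by the absolute, $x$-uniform convergence coming from hypothesis (i), $(\tfrac{c_0}{2},c_1,d_1,\dots)\in\ell_1$.
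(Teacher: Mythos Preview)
Your argument is correct, but it takes a genuinely different route from the paper's proof. The paper never differentiates $\Psi_0$ directly and never invokes the Leibniz rule. Instead it introduces an auxiliary sequence $\Psi_S$ obtained by freezing $\sigma_k,\omega_k$ at the midpoint of each cell of the partition, so that on each cell $\Psi_S$ is literally the constant-coefficient particular solution of Theorem~2.2; then $L_S(\Psi_S)=f$ identically, and Lemma~3.1 is used in the chain $L(\lim_S\Psi_S)=\lim_S L_S(\Psi_S)=f$. Hypothesis~(iii) enters only at the very end, as a quantitative modulus-of-continuity estimate to show $|\Psi_S(x)-\Psi_0(x)|\le \tfrac{8l^2C}{S}\sum_k(|c_k|+|d_k|)\to 0$, whence $\lim_S\Psi_S=\Psi_0$ and $L(\Psi_0)=f$.

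Your approach, by contrast, extracts a qualitative consequence of~(iii): since the H\"older exponent exceeds $1$, the difference quotient tends to $0$ and $P_k,Q_k$ are forced to be constant, so the coefficient functions in $\Psi_0$ are constants and the problem collapses to the constant-coefficient case (indeed, via a Vandermonde argument in $k$ the $A_n$ themselves are constant). This is shorter and more revealing---it exposes that~(iii) as stated is an extremely restrictive hypothesis. The trade-off is robustness: the paper's limiting argument uses~(iii) only as a uniform-in-$k$ continuity bound and would go through essentially unchanged if the exponent $2$ were replaced by any $\alpha>0$ (for fixed $x$ only one cell contributes, giving a bound $O(S^{-\alpha})$), whereas your constancy observation needs the exponent strictly greater than $1$. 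So your route is the cleaner proof of the theorem as written; the paper's route is the one that would survive a weakening of~(iii).
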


\begin{proof}
We put
$$
\Psi_S(x)=\sum_{s=0}^{S-1}Ind_{[\frac{l(2s-S)}{S},\frac{l(2s+2-S)}{S}[}(x)\Big[
\frac{c_0}{2} +
$$
$$\sum_{k=1}^{\infty}\Big(
\frac{c_k\sigma_k(\frac{l(2s+1-S)}{S})-d_k\omega_k(\frac{l(2s+1-S)}{S})}
{\sigma_k^2(\frac{l(2s+1-S)}{S})+ \omega_k^2(\frac{l(2s+1-S)}{S})}
\Big) \cos\Big(\frac{k \pi x }{l}\Big)+
 $$
 $$
 \Big( \frac{c_k\omega_k(\frac{l(2s+1-S)}{S})+d_k\sigma_k(\frac{l(2s+1-S)}{S})}
 {\sigma_k^2(\frac{l(2s+1-S)}{S})+ \omega_k^2(\frac{l(2s+1-S)}{S})} \Big)
  \sin\Big(\frac{k \pi x }{l}\Big) \Big], \eqno(3.6)
$$
On the one hand, by using the result of Lemma 3.1 we have
$$
L(\lim_{S \to \infty}\Psi_S(x))= \lim_{S \to \infty}L(\Psi_S(x))=
\lim_{S \to \infty}L_S(\Psi_S(x))=\lim_{S \to \infty}f(x)=f(x).
$$

On the other hand we have
$$
|\lim_{S \to \infty}\Psi_S(x)-\Psi_0(x))|=\lim_{S \to
\infty}|\Psi_S(x)-\Psi_0(x)|=\lim_{S \to \infty}\Big|
\sum_{s=0}^{S-1}Ind_{[\frac{l(2s-S)}{S},\frac{l(2s+2-S)}{S}[}(x)\Big[
\Big(\frac{c_0}{2}-\frac{c_0}{2}\Big) +
$$
$$\sum_{k=1}^{\infty}\Big(
\frac{c_k\sigma_k(\frac{l(2s+1-S)}{S})-d_k\omega_k(\frac{l(2s+1-S)}{S})}
{\sigma_k^2(\frac{l(2s+1-S)}{S})+
\omega_k^2(\frac{l(2s+1-S)}{S})}-\frac{c_k\sigma_k(x)-d_k\omega_k(x)}
{\sigma_k^2(x)+ \omega_k^2(x)} \Big) \cos\Big(\frac{k \pi x
}{l}\Big)+
 $$
 $$
 \Big( \frac{c_k\omega_k(\frac{l(2s+1-S)}{S})+d_k\sigma_k(\frac{l(2s+1-S)}{S})}
 {\sigma_k^2(\frac{l(2s+1-S)}{S})+ \omega_k^2(\frac{l(2s+1-S)}{S})}
- \frac{c_k\omega_k(x)+d_k\sigma_k(x)}
 {\sigma_k^2(x)+ \omega_k^2(x)}
 \Big)
  \sin\Big(\frac{k \pi x }{l}\Big) \Big]  \Big|\le
  $$
$$
\lim_{S \to \infty} \sum_{s=0}^{S-1}\sup_{x \in
[\frac{l(2s-S)}{S},\frac{l(2s+2-S)}{S}[}\{
\sum_{k=1}^{\infty}\Big|
\frac{c_k\sigma_k(\frac{l(2s+1-S)}{S})-d_k\omega_k(\frac{l(2s+1-S)}{S})}
{\sigma_k^2(\frac{l(2s+1-S)}{S})+
\omega_k^2(\frac{l(2s+1-S)}{S})}-\frac{c_k\sigma_k(x)-d_k\omega_k(x)}
{\sigma_k^2(x)+ \omega_k^2(x)} \Big|+
 $$
 $$
 \Big | \frac{c_k\omega_k(\frac{l(2s+1-S)}{S})+d_k\sigma_k(\frac{l(2s+1-S)}{S})}
 {\sigma_k^2(\frac{l(2s+1-S)}{S})+ \omega_k^2(\frac{l(2s+1-S)}{S})}
- \frac{c_k\omega_k(x)+d_k\sigma_k(x)}
 {\sigma_k^2(x)+ \omega_k^2(x)}
 \Big|\}=
  $$
$$
\lim_{S \to \infty} \sum_{s=0}^{S-1}\sup_{x \in
[\frac{l(2s-S)}{S},\frac{l(2s+2-S)}{S}[}\{
\sum_{k=1}^{\infty}\Big| c_k(\frac{\sigma_k(\frac{l(2s+1-S)}{S})}
{\sigma_k^2(\frac{l(2s+1-S)}{S})+
\omega_k^2(\frac{l(2s+1-S)}{S})}-\frac{\sigma_k(x)}
{\sigma_k^2(x)+ \omega_k^2(x)})-
 $$
 $$
d_k(\frac{\omega_k(\frac{l(2s+1-S)}{S}))}
{\sigma_k^2(\frac{l(2s+1-S)}{S}) +
\omega_k^2(\frac{l(2s+1-S)}{S})}-\frac{\omega_k(x)}
{\sigma_k^2(x)+ \omega_k^2(x)})\Big|+
$$

$$\Big|
d_k(\frac{\sigma_k(\frac{l(2s+1-S)}{S})}
{\sigma_k^2(\frac{l(2s+1-S)}{S}) +
\omega_k^2(\frac{l(2s+1-S)}{S})}-\frac{\sigma_k(x)}
{\sigma_k^2(x)+ \omega_k^2(x)})+
 $$
 $$
c_k(\frac{\omega_k(\frac{l(2s+1-S)}{S}))}
{\sigma_k^2(\frac{l(2s+1-S)}{S}) +
\omega_k^2(\frac{l(2s+1-S)}{S})}-\frac{\omega_k(x)}
{\sigma_k^2(x)+ \omega_k^2(x)})\Big|\}\le
  $$

$$
\lim_{S \to \infty} \sum_{s=0}^{S-1}\sup_{x \in
[\frac{l(2s-S)}{S},\frac{l(2s+2-S)}{S}[}\{
\sum_{k=1}^{\infty}2(|c_k|+|d_k|)\frac{4l^2C}{S^2}\}\le
$$

$$
\lim_{S \to
\infty}\frac{8l^2C}{S}\sum_{k=1}^{\infty}(|c_k|+|d_k|)=0.
$$

\end{proof}

\begin{rem} Theorem 3.2 is a generalization of Theorem 2.2. Indeed, Theorem 2.2 is a simple consequence of Theorem 3.2,
when $A_n(x)=const$ for $0 \le n \le 2m$, because in that cases
all conditions of Theorem 3.2 are fulfilled.
\end{rem}

We say that  $(a_k)_{0 \le k \le s}$ is partition of $[-l,l[$ if
$-l=a_0< a_1 < \cdots < a_{s-1}<a_s=l$.

We say that a real-valued function $f$  on $[-l,l[$ is simple
function if there exists a partition  $(a_k)_{0 \le k \le s}$ of
$[-l,l[$ and a sequence of real numbers $(A_k)_{1 \le k \le s}$
such that $$f(x)=\sum_{k=1}^s A_k Ind_{[a_{k-1},a_k)}(x)$$ for $x
\in [-l,l[$.

We have the following proposition.
\begin{thm}
Suppose that $(A_n(x))_{0 \le n \le 2m}$ is a sequence of
real-valued simple step functions  on $[-l,l[$, i.e. for every
$n~(0 \le n \le 2m)$  there exists a partition $(a^{(n)}_k)_{0 \le
k \le s_n}$ of $[-l,l[$ and a sequence of real numbers
$(A^{(n)}_k)_{1 \le k \le s_n}$ such that
$$A_n(x)=\sum_{k=1}^{s_n}A^{(n)}_k Ind_{[a^{(n)}_{k-1},a^{(n)}_k)}(x)$$ for
$x \in [-l,l[$.

Suppose that $A_0(x)$ does not remain a zero value on $[-l,l[$ and
$\sigma_k^2(x)+\omega_k^2(x) \neq 0 $ for $ x \in [-l,l[$ and  $k
\ge 1$,  where $\sigma_k(x)$ and $\omega_k(x)$ are defined by
(3.3) and (3.4). Suppose also that Fourier coefficients of the
function $f$ standing in the right side of the equation (3.1)
satisfy the following condition $(\frac{c_0}{2}, c_1, d_1, c_2,
d_2, \dots ) \in \ell_1$.

Then the function  $\Psi_0$, defined by
$$\Psi_0(x)=\frac{c_0}{2A_0(x)} +\sum_{k=1}^{\infty}\Big( \frac{c_k\sigma_k(x)-d_k\omega_k(x)}
{\sigma_k^2(x)+\omega_k^2(x)} \Big)     \cos\Big(\frac{k \pi x
}{l}\Big)+ $$
 $$\Big( \frac{c_k\omega_k(x)+d_k\sigma_k(x)}{\sigma_k^2(x)+\omega_k^2(x)} \Big)
  \sin\Big(\frac{k \pi x }{l}\Big), \eqno(3.7)
$$
for $x \in [-l,l[$,  satisfies (3.1)--(3.2) at each point of the
set
 $$(-l,l) \setminus
\cup_{0 \le n \le 2m}\{a^{(n)}_1, a^{(n)}_2, \cdots,
a^{(n)}_{s_n-1}\}$$  .
\end{thm}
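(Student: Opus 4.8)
The idea is to localise: away from the finitely many jump points of the coefficients, equation (3.1) is, on each component interval, a linear equation with \emph{constant} coefficients, so one may invoke the constant-coefficient Theorem 2.2 there.

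First I would introduce the finite set
\[
D=(-l,l)\cap\bigcup_{0\le n\le 2m}\{a^{(n)}_1,\dots,a^{(n)}_{s_n-1}\},
\]
so that $(-l,l)\setminus D$ is a finite disjoint union of open intervals, and I would fix $x_0\in(-l,l)\setminus D$, letting $I=(a,b)$ be the component of $(-l,l)\setminus D$ that contains it. For each $n$ the cells $[a^{(n)}_{k-1},a^{(n)}_k[$ partition $[-l,l[$ and $A_n$ is constant on each of them, while $I$ is a connected subset of $(-l,l)$ meeting none of the interior division points $a^{(n)}_1,\dots,a^{(n)}_{s_n-1}$; hence $I$ lies inside a single cell and $A_n$ is constant on $I$, say $A_n\equiv\widetilde A_n$ there. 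Consequently $\sigma_k(x)$ and $\omega_k(x)$ reduce on $I$ to the constants $\widetilde\sigma_k=\sum_{n=0}^m(-1)^n\widetilde A_{2n}(k\pi/l)^{2n}$ and $\widetilde\omega_k=\sum_{n=0}^{m-1}(-1)^n\widetilde A_{2n+1}(k\pi/l)^{2n+1}$, and by (3.7) the restriction of $\Psi_0$ to $I$ coincides, coefficient by coefficient, with the series
\[
\Psi_p(x)=\frac{c_0}{2\widetilde A_0}+\sum_{k=1}^{\infty}\Big(\frac{c_k\widetilde\sigma_k-d_k\widetilde\omega_k}{\widetilde\sigma_k^2+\widetilde\omega_k^2}\Big)\cos\Big(\frac{k\pi x}{l}\Big)+\Big(\frac{c_k\widetilde\omega_k+d_k\widetilde\sigma_k}{\widetilde\sigma_k^2+\widetilde\omega_k^2}\Big)\sin\Big(\frac{k\pi x}{l}\Big).
\]

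Next I would verify that Theorem 2.2 applies to the constant-coefficient equation $\sum_{n=0}^{2m}\widetilde A_n\Psi^{(n)}=f$: one has $\widetilde A_0=A_0(x_0)\ne 0$ because $A_0$ never attains the value $0$ on $[-l,l[$; $\widetilde\sigma_k^2+\widetilde\omega_k^2=\sigma_k^2(x_0)+\omega_k^2(x_0)\ne 0$ for every $k\ge 1$ by hypothesis; and the Fourier coefficients $(\tfrac{c_0}{2},c_1,d_1,\dots)$ of $f$ lie in $\ell_1$, which moreover forces the series in (3.2) to converge uniformly, so that $f\in C[-l,l]$ and (3.2) holds at $x_0$. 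Theorem 2.2 then gives that the series $\Psi_p$ above belongs to $FD^{(2m)}[-l,l[$ and is a particular solution of $\sum_{n=0}^{2m}\widetilde A_n\Psi^{(n)}=f$; in particular it is $2m$ times differentiable and $\sum_{n=0}^{2m}\widetilde A_n\Psi_p^{(n)}(x_0)=f(x_0)$. Since $\Psi_0$ and $\Psi_p$ agree on the open set $I$ and $x_0$ is an interior point of $I$, the derivatives $\Psi_0^{(n)}(x_0)$ exist and equal $\Psi_p^{(n)}(x_0)$ for $0\le n\le 2m$, and using $A_n(x_0)=\widetilde A_n$ once more,
\[
\sum_{n=0}^{2m}A_n(x_0)\,\Psi_0^{(n)}(x_0)=\sum_{n=0}^{2m}\widetilde A_n\,\Psi_p^{(n)}(x_0)=f(x_0),
\]
so $\Psi_0$ satisfies (3.1)--(3.2) at $x_0$; as $x_0\in(-l,l)\setminus D$ was arbitrary, this finishes the argument.

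The one genuinely delicate point is this last localisation step: as a function on all of $[-l,l[$ the series (3.7) is only piecewise well behaved — its ``constants'' $\widetilde A_0,\widetilde\sigma_k,\widetilde\omega_k$ change from one component interval to the next, so $\Psi_0$ need not even be continuous across the points of $D$ — and one must make sure that the value of each derivative $\Psi_0^{(n)}$ at $x_0$ is governed entirely by the behaviour of $\Psi_0$ on a neighbourhood of $x_0$ lying inside $I$, where it coincides with the honestly (term-by-term) $2m$-times differentiable function furnished by Theorem 2.2. It is also worth stressing that the reduction must be carried out on \emph{every} component interval of $(-l,l)\setminus D$, which is exactly why the hypothesis is that $A_0(x)$ does not take the value $0$ anywhere, and not merely that $A_0(x_0)\ne 0$.
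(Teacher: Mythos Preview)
Your argument is correct and follows essentially the same route as the paper: fix a point $x_0$ away from the jump set, observe that all the $A_n$ are constant on an open neighbourhood of $x_0$, apply Theorem~2.2 to the resulting constant-coefficient equation, and note that the series $\Psi_p$ it produces coincides with $\Psi_0$ on that neighbourhood, so the equation holds at $x_0$. Your write-up is in fact a bit more careful than the paper's (you explicitly check $f\in C[-l,l]$ via the $\ell_1$ hypothesis and spell out why the derivatives of $\Psi_0$ at $x_0$ are determined locally), but the strategy is identical.
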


\begin{proof}
If $x_0 \in (-l,l) \setminus G (:=\cup_{0 \le n \le
2m}\{a^{(n)}_1, a^{(n)}_2, \cdots, a^{(n)}_{s_n-1}\})$, then by
virtue of the openness of the $G$  there exists a positive real
number $r>0$ such that $(x_0-r,x_0+r) \subseteq G$.  It is obvious
that $(A_n(x)$ is constant on  $(x_0-r,x_0+r)$ for   $0 \le n \le
2m$. We set $A_n:=A_n(x_0)$ for $0 \le n \le 2m$.

For $m \ge 1$, let us consider an ordinary differential equation
$$
\sum_{n=0}^{2m} A_n \frac{d^{n}}{d x^{n}} \Psi=f. \eqno(3.8)
$$

Note that for (3.8) all conditions of Theorem 2.2 are fulfilled.
Hence the series $\Psi_p$, defined by
$$\Psi_p(x)=\frac{c_0}{2A_0} +\sum_{k=1}^{\infty}\Big( \frac{c_k\sigma_k-d_k\omega_k}{\sigma_k^2+\omega_k^2} \Big)     \cos\Big(\frac{k
\pi x }{l}\Big)+ $$
 $$\Big( \frac{c_k\omega_k+d_k\sigma_k}{\sigma_k^2+\omega_k^2} \Big)   \sin\Big(\frac{k \pi x }{l}\Big), \eqno(3.9)
$$
is a particular  solution of  (3.8), where $\sigma_k$ and
$\omega_k$ are defined by (2.1) and (2.2), respectively.

Notice that $\Psi_p$ defined by (3.9) coincides with $\psi_0$
defined by (3.7) at all point $x \in (x_0-r,x_0+r)$. Similarly,
the equation (3.8) with (3.2) coincides with the equation (3.1)
with (3.2)  at all point $x \in (x_0-r,x_0+r)$. Hence $\psi_0$
defined by (3.7) satisfies (3.1)--(3.2) at each point of the set
$(x_0-r,x_0+r)$, in particular, at point $x_0$. Since $x_0 \in
(-l,l) \setminus G$ was taken arbitrary, we  end the proof of
Theorem 3.4.

\end{proof}

\section{ On a graphical solution of the linear non-homogeneous ordinary differential
equation of the higher order  whose coefficients are real-valued
simple step functions}

Let consider the linear non-homogeneous ordinary differential
equation  of the $22$-th order
$$
\Psi(x)+A_2(x)\frac{d^2}{dx^2}\Psi(x)+A_5(x)\frac{d^5}{dx^5}\Psi(x)+A_{14}(x)\frac{d^{14}}{dx^{14}}\Psi(x)+
A_{20}(x)\frac{d^{20}}{dx^{20}}\Psi(x)+
$$
$$A_{22}(x)\frac{d^{22}}{dx^{22}}\Psi(x) =1+2 \cos(x),\eqno(4.1)
$$
where

$$A_2(x)=-0.001 \times Ind_{[-\pi,-\pi/2[}(x)-0.002 \times
Ind_{[-\pi/2,0[}(x)-0.001\times Ind_{[0,\pi/2[}(x)-0.002\times
Ind_{[\pi/2, \pi[}(x), $$

$$A_5(x)=0.01 \times Ind_{[-\pi,-\pi/2[}(x)-0.01 \times
Ind_{[-\pi/2,0[}(x)+0.002\times Ind_{[0,\pi/2[}(x)-0.002\times
Ind_{[\pi/2, \pi[}(x), $$

$$A_{14}(x)=0.1 \times Ind_{[-\pi,-\pi/2[}(x)-0.1 \times
Ind_{[-\pi/2,0[}(x)-0.4\times Ind_{[0,\pi/2[}(x)+0.007\times
Ind_{[\pi/2, \pi[}(x), $$

$$A_{20}(x)=-0.01 \times Ind_{[-\pi,-\pi/2[}(x)+0.01 \times
Ind_{[-\pi/2,0[}(x)+0.002\times Ind_{[0,\pi/2[}(x)-0.22\times
Ind_{[\pi/2, \pi[}(x), $$

$$A_{22}(x)=0.001 \times Ind_{[-\pi,-\pi/2[}(x)-0.001 \times
Ind_{[-\pi/2,0[}(x)+0.0003\times Ind_{[0,\pi/2[}(x)-0.0003\times
Ind_{[\pi/2, \pi[}(x). $$

\medskip

{\bf Definition 4.1} We say that $g \in FD^{(22)}([-\pi,\pi]
\setminus G)~(G:=\{ -\pi, -\pi/2, 0,\pi/2, \pi\})$ if
$$g(x)=g_1(x)  \times Ind_{[-\pi,-\pi/2[}(x)+g_2(x)  \times
Ind_{[-\pi/2,0[}(x)+g_3(x) \times Ind_{[0,\pi/2[}(x)+g_4(x) \times
Ind_{[\pi/2, \pi[}(x) \eqno(4.2)$$
 for
some $g_1,g_2,g_3,g_4 \in FD^{(22)}([-\pi,\pi[)$.

\medskip

Below we present the program in MathLab which gives the graphical
solution of the differential equation (4.1) in the class
$FD^{(22)}([-\pi,\pi[ \setminus G)$.

{\bf

$A1=[0, -0.001,0, 0, 0.01,   0,0,0,0,0,  0,0,0,0.1,0,
0,0,0,0,-0.01,  0,0.001];$

$A2=[0, -0.002,0, 0, -0.01,   0,0,0,0,0,  0,0,0,-0.1,0,
0,0,0,0,0.01,  0,-0.001];$

$A3=[0, -0.001,0, 0, 0.002,   0,0,0,0,0,  0,0,0,-0.4,0,
0,0,0,0,0.002,  0,0.0003];$

$A4=[0, -0.002,0, 0, -0.002,   0,0,0,0,0,  0,0,0,0.007,0,
0,0,0,0,-0.22,  0,-0.0003];$

$C=[2, 0,0, 0, 0,0,0,0,0,0,0,0,0,0,0,0,0,0,0,0,0,0];$

$D=[0,0,0, 0, 0,0,0,0,0,0,0,0,0,0,0,0,0,0,0,0,0,0];$

$C0=2; A10=1; A20=1; A30=1; A40=1;$

$x=1:20;$

$S1=A10;~ S2=A20;~  S3=A30;~  S4=A40;$

for $k=1:11$

$S1=S1+(-1)^{}(k)*A1(2*k)*x.^{}(2*k);$

$S2=S2+(-1)^{}(k)*A2(2*k)*x.^{}(2*k);$

$S3=S3+(-1)^{}(k)*A3(2*k)*x.^{}(2*k);$

$S4=S4+(-1)^{}(k)*A4(2*k)*x.^(2*k);$

end

$O1=A1(1); O2=A2(1); O3=A3(1); O4=A4(1);$

for $k=1:10$

$O1=O1 +(-1)^{}k*A1(2*k+1)*x.^{}(2*k+1);$

$O2=O2 +(-1)^{}k*A2(2*k+1)*x.^{}(2*k+1);$

$O3=O3 +(-1)^{}k*A3(2*k+1)*x.^{}(2*k+1);$

$O4=O4 +(-1)^{}k*A4(2*k+1)*x.^{}(2*k+1);$

end

$x1=(-pi): ( pi/100) : (-pi/2);$

$y1=  C0 /(2*A10);$

 for $n=1:20$

$y1=y1+cos(n*x1).*(C(n)*S1(n)-D(n)*O1(n))/(S1(n)^{}2+O1(n)^{}2)+$

$sin(n* x1).*(C(n)*O1(n)+D(n)*S1 (n))/(S1(n)^{}2+O1(n)^{}2); $

end

$x2=(-pi/2): ( pi/100) :0;$

$y2=  C0 /(2*A20);$

for $n=1:20$

$y2=y2+cos(n*x2).*(C(n)*S2(n)-D(n)*O2(n))/(S2(n)^{}2+O2(n)^{}2)+$

$sin(n* x2).*(C(n)*O2(n)+ D(n)*S2 (n))/(S2(n)^{}2+O2(n)^{}2); $

end

$x3=0: ( pi/100) : (pi/2);$

$y3=  C0 /(2*A30);$

for $n=1:20$

$y3=y3+cos(n*x3).*(C(n)*S3(n)-D(n)*O3(n))/(S3(n)^{}2+O3(n)^{}2)+$

$sin(n* x3).*(C(n)*O3(n)+ D(n)*S3 (n))/(S3(n)^{}2+O3(n)^{}2); $

end

$x4=(pi/2): ( pi/100) :pi;$

$y4=  C0 /(2*A40);$

for $n=1:20$

$y4=y4+cos(n*x4).*(C(n)*S4(n)-D(n)*O4(n))/(S4(n)^{}2+O4(n)^{}2)+$

$sin(n* x4).*(C(n)*O4(n)+ $ $D(n)*S4 (n))/(S4(n)^{}2+O4(n)^{}2);$

end

for $i=1:20$

if $O1(i)^{}2+S1(i)^{}2~=0; O2(i)^{}2+S2(i)^{}2~=0;
O3(i)^{}2+S3(i)^{}2~=0; O3(i)^{}2+S3(i)^{}2~=0;$

plot$(x1,y1,x2,y2, x3,y3,x4,y4)$

else error('the ordinary differential  equation  has no solution
or has infinitely many solutions' in the class
$FD^{(22)}([-\pi,\pi] \setminus G)$)

end

end }

\medskip

On Figure 1, the graphical solution of the differential equation
(4.1) is presented.

\begin{figure}[h]
\center{\includegraphics[width=1\linewidth]{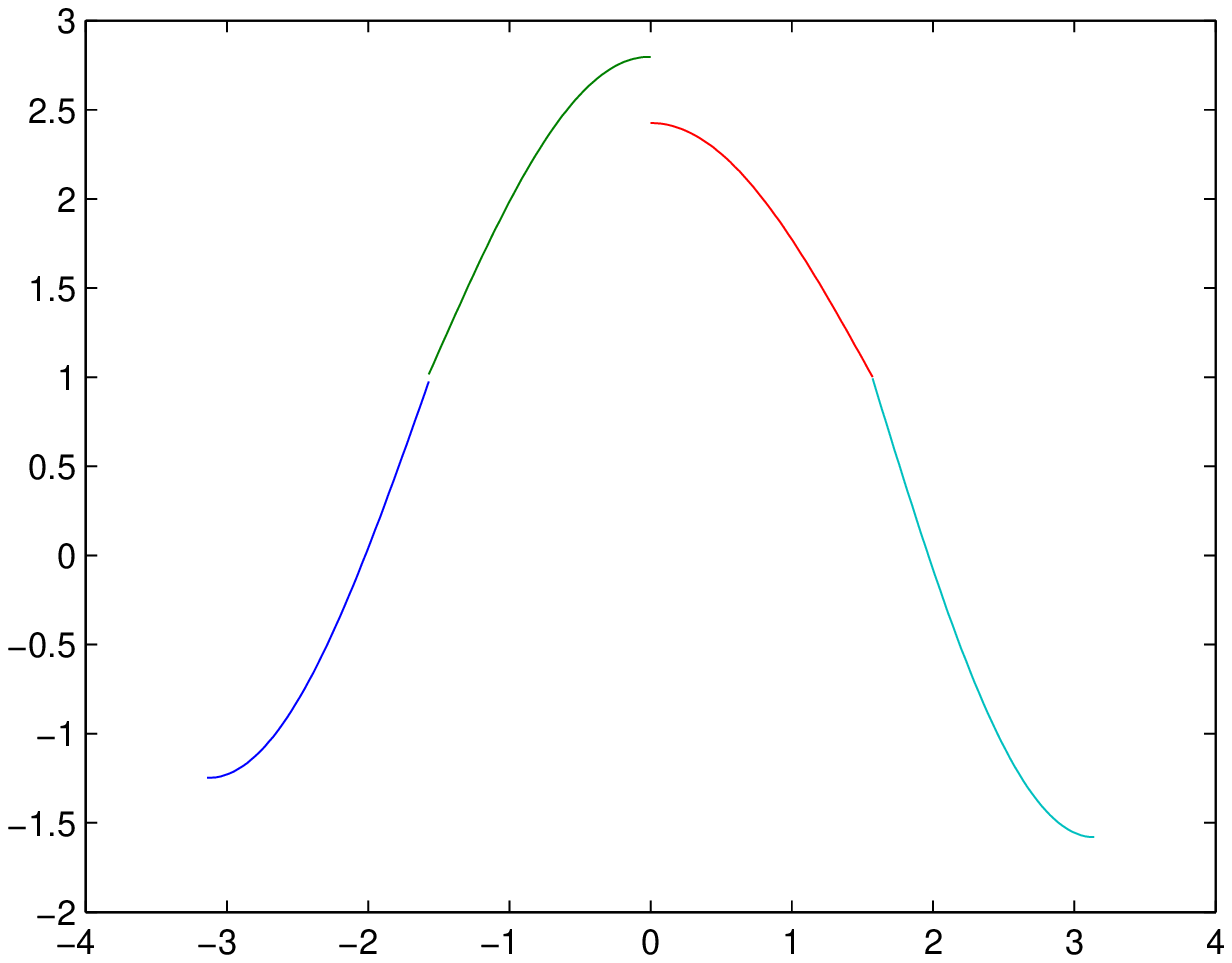}}
\caption{Graphical solution of the ODE (4.1).} \label{ris:image}
\end{figure}

{\bf Remark 4.1} Notice that for each natural number $M>1$, one
can easily modify this program in MathLab for obtaining a
graphical solution of the differential equation (3.1)-(3.2) in
$FD^{(2M)}([-l,l[\setminus G)$ whose coefficients $(A_n(x))_{0 \le
n \le 2M}$ are real-valued simple step functions on $[-l,l[$, $f$
is a trigonometric polynomial on $[-l,l[$ and  $G$ is the
partition of the interval $[-l,l[$ defined by the family
$(A_n(x))_{0 \le n \le 2M}$.

{\bf Remark 4.2} Since each  constant $c$ admits the following
evident representation
$$c=c \times Ind_{[-\pi,-\pi/2[}(x)+c  \times
Ind_{[-\pi/2,0[}(x)+c \times Ind_{[0,\pi/2[}(x)+c \times
Ind_{[\pi/2, \pi[}(x), \eqno(4.3)$$ we can use above mentioned
program for a solution of the differential equation (2.3)-(2.4)
with constant coefficients.

\begin{figure}[h]
\center{\includegraphics[width=1\linewidth]{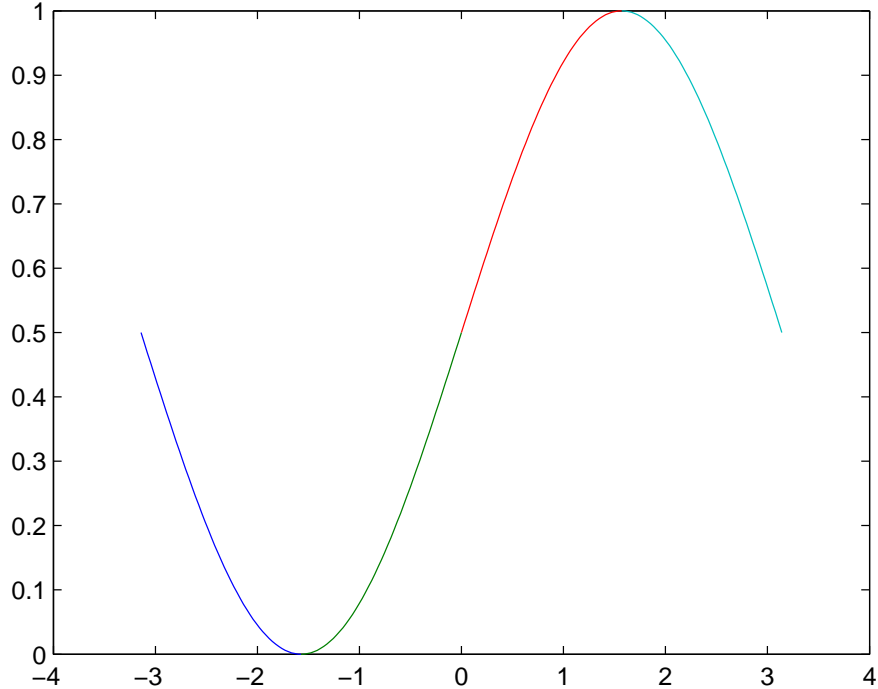}}
\caption{Graphical solution of the ODE (4.4).} \label{ris:image}
\end{figure}

On Figure 2, the graphical solution of the  linear non-homogeneous
ordinary differential equation of the of the second order with
real-valued constant coefficients

$$
\Psi(x)-\frac{d^2}{dx^2}\Psi(x)=1/2+\cos(x), \eqno(4.4)
$$
is presented, which has been obtained by entering in the above
mentioned program  of the following data:

{\bf

$A1=[0, -1,0, 0, 0,0,0,0,0,0,0,0,0,0,0,0,0,0,0,0,0,0];$

$A2=[0, -1,0, 0, 0,0,0,0,0,0,0,0,0,0,0,0,0,0,0,0,0,0];$

$A3=[0, -1,0, 0, 0,0,0,0,0,0,0,0,0,0,0,0,0,0,0,0,0,0];$

$A4=[0, -1,0, 0, 0,0,0,0,0,0,0,0,0,0,0,0,0,0,0,0,0,0];$

$C=[0, 0,0, 0, 0,0,0,0,0,0,0,0,0,0,0,0,0,0,0,0,0,0];$

$D=[1,0,0, 0, 0,0,0,0,0,0,0,0,0,0,0,0,0,0,0,0,0,0];$

$C0=1;~ A10=1;~ A20=1;~ A30=1;~ A40=1;$}

\begin{rem} The approach of Theorem 3.4 used for a solution of
(3.1)-(3.2) with real-valued simple step functions $(A_n(x))_{0\le n
\le 2M}$ can be used in such a case when the corresponding coefficients
 are continuous functions on
$[-l,l[$. If we will approximate these coefficients  by real-valued simple step functions, then it is natural
to wait that under some "nice restrictions" on these coefficients  the solution obtained by Theorem 3.4, will be
a "good approximation" of the corresponding solution.
\end{rem}

\medskip



\end{document}